\newfont{\footsc}{cmcsc10 at 8truept}
\newfont{\footbf}{cmbx10 at 8truept}
\newfont{\footrm}{cmr10 at 10truept}
\newtheorem{theorem}{Theorem}
\newtheorem{corollary}[theorem]{Corollary}
\newtheorem{definition}[theorem]{Definition}
\newtheorem{lemma}[theorem]{Lemma}
\newtheorem{question}[theorem]{Question}
\newenvironment{proof}[1][Proof]{\noindent{\textbf {#1}  }}  {\hfill$\Box$\bigskip}
\begin{document}

\title{Some properties and applications of odd-colorable $r$-hypergraphs \thanks{This work was supported by the
Hong Kong Research Grant Council (Grant Nos. PolyU 501212, 501913, 15302114
and 15300715) and NSF of China (Grant Nos. 11231004, 11571123 and 11101263)
and by a grant of \textquotedblleft The First-class Discipline of Universities
in Shanghai\textquotedblright.\textit{ }}}
\author{Xiying Yuan\thanks{Department of Mathematics, Shanghai University, Shanghai
200444, China; \textit{E-mail address: xiyingyuan2007@hotmail.com }} \ Liqun
Qi\thanks{Department of Applied Mathematics, The Hong Kong Polytechnic
University, HungHom, Kowloon, HongKong, \textit{Email address:
liqun.qi@polyu.edu.hk}} \ Jiayu Shao\thanks{Corresponding author}
\thanks{Department of Mathematics, Tongji University, Shanghai, China,
\textit{Email address: jyshao@tongji.edu.cn}} \ Chen Ouyang\thanks{Department
of Applied Mathematics, The Hong Kong Polytechnic University, HungHom,
Kowloon, HongKong, \textit{Email address: oychen26@126.com}}}
\maketitle

\begin{abstract}
Let $r\geq2$ and $r$ be even. An $r$-hypergraph $G$ on $n$ vertices is called
odd-colorable if there exists a map $\varphi:[n]\rightarrow\lbrack r]$ such
that for any edge $\{j_{1},j_{2},\cdots,j_{r}\}$ of $G$, we have
$\varphi(j_{1})+\varphi(j_{2})+\cdot\cdot\cdot+\varphi(j_{r})\equiv
r/2(\operatorname{mod}r).$  In this paper, we first determine that, if $r=2^{q}(2t+1)$ and $n\ge 2^{q}(2^{q}-1)r$, then the maximum chromatic number in the class of the odd-colorable $r$-hypergraphs on $n$ vertices is $2^q$,  which answers a question raised
by V. Nikiforov recently in [V. Nikiforov, Hypergraphs and hypermatrices with
symmetric spectrum. Prinprint available in arXiv:1605.00709v2, 10 May, 2016]. We also study some applications of the symmetric spectral property of the odd-colorable $r$-graphs given in that same paper by V. Nikiforov. We show that the Laplacian spectrum
and the signless Laplacian spectrum of an $r$-hypergraph $G$ are equal if and only if $G$ is odd-colorable, and then study some further applications of these spectral properties.

\textbf{AMS classification: }\textit{15A42, 05C50}

\textbf{Keywords:}\textit{ }$r$-hypergraph; Laplacian spectrum; Signless Laplacian
spectrum; Odd-colorable; Chromatic number

\end{abstract}

\section{Introduction}

Denote the set $\{1,2,\cdot\cdot\cdot,n\}$ by $[n]$. An $r$-hypergraph $G$
$=(V(G),E(G))$ on $n$ vertices is an $r$-uniform hypergraph each of whose edges contains exactly $r$ vertices (\cite{Berge1973}). In this paper, $r$-hypergraph is simply called $r$-graph for convenience.
A 2-graph is just an ordinary graph.

The definition of odd-coloring for tensors (it is called $r$-matrices in
\cite{Nikiforov2016}) was introduced in \cite{Nikiforov2016}, we just focus on
its version for $r$-graph as follows.

\begin{definition}
Let $r\geq2$ and $r$ be even. An $r$-graph $G$ with $V(G)=[n]$ is called
odd-colorable if there exists a map $\varphi:[n]\rightarrow\lbrack r]$ such
that for any edge $\{j_{1},j_{2},\cdots,j_{r}\}$ of $G$, we have
\[
\varphi(j_{1})+\cdot\cdot\cdot+\varphi(j_{r})\equiv r/2(\operatorname{mod}r).
\]
The function $\varphi$ is called an odd-coloring of $G.$
\end{definition}


The following concept of odd-bipartite $r$-graphs was taken from \cite{HuQi2014}, and this concept acts as generalizations of the ordinary bipartite graphs.
\begin{definition} \cite{HuQi2014}
An $r$-graph $G = (V,E)$ is called odd-bipartite, if $r$ is even and there
exists some proper subset $V_{1}$ of $V$ such that each edge of $G$ contains exactly odd number of
vertices in $V_{1}$.
\end{definition}

The odd-bipartite $r$-graphs  were also called odd-transversal $r$-graphs in literature (see
\cite{Berge1973}, \cite{Cowen2007}, or \cite{Nikiforov2016}). The connection between odd-bipartiteness and spectra of
$r$-graphs  was studied in \cite{HuQi2014}, \cite{HuQiXie2015}, \cite{Nikiforov2014} and \cite{ShaoShanWu}.

In \cite{Nikiforov2016}, it was proved that an odd-bipartite graph is always odd-colorable (see
Proposition 11 in \cite{Nikiforov2016}), and furthermore, in the case $r\equiv2($mod$4)$, then
$G$ is odd-colorable if and only if $G$ is odd-bipartite (see Proposition 12
in \cite{Nikiforov2016}). 

An $r$-graph $G$ is called $k$-chromatic if its vertices can be partitioned into
$k$ sets so that each edge intersects at least two sets. The chromatic number
$\chi(G)$ of $G$ is the smallest $k$ for which $G$ is $k$-chromatic. The
chromatic number of an odd-colorable $r$-graph is also considered in
\cite{Nikiforov2016}. Clearly, each nontrivial odd-bipartite graph has
chromatic number 2. A family of 3-chromatic odd-colorable $4k$-graphs on $n$
vertices is constructed in \cite{Nikiforov2016}. Notice that odd-colorable
$r$-graphs are defined only for even $r$. For further information about the
chromatic number of odd-colorable graph, the following question is raised in
\cite{Nikiforov2016}.

\begin{question}
\label{Chromatic number} Let $r\equiv0(\operatorname{mod}4).$ What is the
maximum chromatic number of an odd-colorable $r$-graph on $n$ vertices?
\end{question}
In section 2, we will determine that, if $r$ is even, $r=2^{q}(2t+1)$ for some integers $q,t$ and $n\ge 2^{q}(2^{q}-1)r$, then the maximum chromatic number in the class of the odd-colorable $r$-graphs on $n$ vertices is $2^q$. This result provides an answer to the above Question \ref{Chromatic number}.

\begin{definition}
\cite{HuQiXie2015} \cite{Qi2014} Let $G=(V(G),E(G))$ be an $r$-graph on $n$
vertices. The adjacency tensor of $G$ is defined as the order $r$ dimension
$n$ tensor $\mathcal{A}(G)$ whose $(j_{1}\cdots j_{r})$-entry is:
\[
(\mathcal{A}(G))_{j_{1}j_{2}\cdots j_{r}}=%
\begin{cases}
\frac{1}{(r-1)!} & \text{if $\{j_{1},j_{2},\cdots,j_{r}\}\in E(G),$}\\
0 & \text{otherwise}.
\end{cases}
\]
Let $\mathcal{D}(G)$ be an order $r$ dimension $n$ diagonal tensor, with its
diagonal entry $\mathcal{D}_{jj\cdots j}$ being the degree of vertex $j$, for all
$j\in\lbrack n]$. Then $\mathcal{L(}G\mathcal{)}=\mathcal{D(}G\mathcal{)}%
-\mathcal{A(}G\mathcal{)}$ is called the Laplacian tensor of $r$-graph $G$, and $\mathcal{Q(}G\mathcal{)}=\mathcal{D(}G\mathcal{)}%
+\mathcal{A(}G\mathcal{)}$ is called the signless Laplacian tensor of $G,$.
\end{definition}

The following general product of tensors, was defined in \cite{Shao},
which is a generalization of the matrix case. Let $\mathcal{A}$ and $\mathcal{B}$ be dimension $n$ and order $m\geq2$
and $k\geq1$ tensors, respectively. The product $\mathcal{AB}$ is the
following tensor $\mathcal{C}$ of dimension $n$ and order $(m-1)(k-1)+1$ with entries:
\begin{equation}
\mathcal{C}_{i\alpha_{1}\cdots\alpha_{m-1}}=\sum_{i_{2},\cdots,i_{m}\in\lbrack
n_{2}]}\mathcal{A}_{ii_{2}\cdots i_{m}}\mathcal{B}_{i_{2}\alpha_{1}}%
\cdots\mathcal{B}_{i_{m}\alpha_{m-1}}, \label{1}%
\end{equation}
where $i\in\lbrack n],\alpha_{1},\cdots,\alpha_{m-1}\in\lbrack n]^{k-1}$.

Let $\mathcal{T}$ be an order $r$ dimension $n$ tensor, let $x=(x_{1}%
,\cdot\cdot\cdot,x_{n})^{T}\in\mathbb{C}^{n}$ be a column vector of dimension
$n$. Then by (1) $\mathcal{T}x$ is a vector in $\mathbb{C}^{n}$ whose $j$th
component is as the following%

\begin{equation}
(\mathcal{T}x)_{j}=\sum_{j_{2},\cdots,j_{r}=1}^{n}\mathcal{T}_{jj_{2}\cdots
j_{r}}x_{j_{2}}\cdots x_{j_{r}}. \label{2}%
\end{equation}

Let $x^{[r]}=(x_{1}^{r},\cdots,x_{n}^{r})^{T}$. Then (see \cite{ChangPZ2008}
\cite{Qi2014}) a number $\lambda\in\mathbb{C}$ is called an eigenvalue of the
tensor $\mathcal{T}$ of order $r$ if there exists a nonzero vector
$x\in\mathbb{C}^{n}$ satisfying the following eigenequations%

\begin{equation}
\mathcal{T}x=\lambda x^{[r-1]}, \label{3}%
\end{equation}
and in this case, $x$ is called an eigenvector of $\mathcal{T}$ corresponding
to eigenvalue $\lambda$.  The spectral radius of $\mathcal{T}$ is defined as
\[
\rho(\mathcal{T})=max\{|\mu|:\mu\text{ is an eigenvalue of }\mathcal{T}\}.
\]

In order to define the spectra of tensors, we first need to define the determinants of tensors. Originally the determinants of tensors were defined as the resultants of some corresponding system of homogeneous equations on $n$ variables. Here we give the following equivalent definition of the determinants of tensors.

\begin{definition}
Let $\mathcal{A}$ be an order $m$ dimension $n$ tensor with $m\ge 2$. Then its determinant $det(\mathcal{A})$ is defined to be the unique polynomial on the entries of $\mathcal{A}$ satisfying the following three conditions:
\vskip 0.1cm

\noindent (1) $det(\mathcal{A})=0$ if and only if the system of homogeneous equations $\mathcal{A}x=0$ has a nonzero solution.

\vskip 0.1cm

\noindent (2) $det(\mathcal {A})=1$, when $\mathcal {A}=\mathbb {I}$ is the unit tensor.

\vskip 0.1cm

\noindent (3) $det(\mathcal{A})$ is an irreducible polynomial on the entries of $\mathcal{A}$, when the entries of $\mathcal{A}$ are viewed as distinct independent variables.
\end{definition}

\begin{definition}
\label{spectrum} Let $\mathcal{A}$ be an order $m\ge 2$ dimension $n$ tensor. Then the characteristic polynomial of $\mathcal {A}$ is defined to be the determinant $det(\lambda \mathcal{I}- \mathcal{A})$. The (multi)-set of roots of the characteristic polynomial of $\mathcal{A}$ (counting multiplicities) is called the spectrum of $\mathcal{A}$, denoted by $Spec(\mathcal{A})$.
\end{definition}

If the tensor $\mathcal{T}$ and $-\mathcal{T}$ have the same spectrum (i.e.,
the spectrum of $\mathcal{T}$ is symmetric about the origin), then the spectrum
of $\mathcal{T}$ is said to be symmetric in this paper. In \cite{Nikiforov2016}, Nikiforov studied some symmetric spectral property of the odd-colorable $r$-graphs. He proved that for an $r$-graph $G$, $Spec(\mathcal{A}(G))=-Spec(\mathcal{A}(G))$ if and only if $r$ is even and $G$ is odd-colorable. This result  solves a problem in \cite{Pearson2014} about $r$-graphs with symmetric spectrum and disproves a conjecture in \cite{Zhou2014}.

In Section 3, we will give some applications and consequences of these symmetric spectral property of the odd-colorable $r$-graphs given in \cite{Nikiforov2016}. In particular, we obtain (in Theorem \ref{main result of section 2}) some further symmetric spectral property of the odd-colorable $r$-graphs related to the Laplacian and signless Laplacian spectrum of an $r$-graph $G$. The proof of the disconnected case of this result need to use the Perron-Frobenius Theorem on nonnegative weakly irreducible tensors, the relation between the (Laplacian and signless Laplacian) spectra of an disconnected $r$-graph $G$ with that of all the connected components of $G$, and so on. We also use these results to study the Question \ref{Hspectra} proposed in \cite{ShaoShanWu} about the relations between H-spectra of $\mathcal{L}(G)$ and $\mathcal{Q}(G)$ with the spectra of $\mathcal{L}(G)$ and $\mathcal{Q}(G)$, and obtain an affirmative answer to Question \ref{Hspectra} for the remaining unsolved case $r\equiv 2 \ (mod \ 4)$ in Theorem \ref{characterization of the inverse}.

\section{ Maximum chromatic number of an odd-colorable $r$-graph}

Let $r$ be even, then there uniquely exist two integers $q,t$ such that
$r=2^{q}(2t+1).$  In this section, we will determine that, if $n\ge 2^{q}(2^{q}-1)r$, then the maximum chromatic number in the class of the odd-colorable $r$-graphs on $n$ vertices is $2^q$. This result also provides an answer to Question \ref{Chromatic number} in \S 1.

First we prove the following upper bound on the chromatic number of the odd-colorable
$r$-graphs.

\begin{theorem}
\label{bound for chromatic number} \ Let $q\geq1,t\geq0$ be two integers and
$r=2^{q}(2t+1)$, and $G$ be an odd-colorable $r$-graph. Then $\chi(G)\leq
2^{q}.$
\end{theorem}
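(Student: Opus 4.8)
The plan is to start from an odd-coloring $\varphi\colon[n]\to[r]$ of $G$ and manufacture from it a proper coloring of $G$ using at most $2^q$ colors. Since $2^q\mid r$, the natural candidate is the reduction $\psi(j):=\varphi(j)\bmod 2^q$, which takes values in $\{0,1,\dots,2^q-1\}$ and hence uses at most $2^q$ colors. The entire task then reduces to verifying that $\psi$ is a legitimate coloring in the hypergraph sense, i.e.\ that no edge of $G$ is monochromatic under $\psi$; once that is established, $\chi(G)\leq 2^q$ follows immediately.

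First I would set up a proof by contradiction. Suppose some edge $e=\{j_1,\dots,j_r\}$ has all its vertices receiving the same $\psi$-color $c$, so that $\varphi(j_i)\equiv c\pmod{2^q}$ for each $i$. Summing over the edge gives $\sum_{i=1}^{r}\varphi(j_i)\equiv rc\equiv 0\pmod{2^q}$, where the final congruence uses $2^q\mid r$.

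Next I would invoke the defining property of the odd-coloring, namely $\sum_{i=1}^{r}\varphi(j_i)\equiv r/2\pmod r$. Because $2^q\mid r$, this congruence persists modulo $2^q$, and the key arithmetic observation is that $r/2=2^{q-1}(2t+1)\equiv 2^{q-1}\pmod{2^q}$, since the odd factor $2t+1$ contributes only a further multiple of $2^q$. Hence $\sum_{i=1}^{r}\varphi(j_i)\equiv 2^{q-1}\pmod{2^q}$, and as $q\geq 1$ we have $2^{q-1}\not\equiv 0\pmod{2^q}$. This contradicts the congruence derived in the previous paragraph, so no monochromatic edge can exist; therefore $\psi$ is a proper coloring and $\chi(G)\leq 2^q$.

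I do not expect a serious obstacle here: the argument is essentially the single observation that reducing the odd-coloring modulo $2^q$ destroys all monochromatic edges. The only point demanding any care is the verification that the target residue $r/2\equiv 2^{q-1}\pmod{2^q}$ is nonzero, which is precisely where the hypothesis that $r$ is even (forcing $q\geq 1$) and the exact value $r/2$ of the odd-coloring sum condition enter the argument. I would phrase this residue computation explicitly to make clear that it fails for $q=0$, consistent with odd-colorability being defined only for even $r$.
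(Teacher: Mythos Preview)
Your proof is correct and follows essentially the same approach as the paper: both take the color classes to be the fibers of $\varphi\bmod 2^q$ and derive a contradiction from a hypothetical monochromatic edge by comparing $\sum_i\varphi(j_i)$ against $r/2$. The only cosmetic difference is that the paper writes the two expressions for the sum as integer equations (with explicit quotients $c_l$ and $c$) and reaches a parity contradiction $2[\cdots]=2t+1$, whereas you reduce modulo $2^q$ from the outset and obtain the cleaner contradiction $0\equiv 2^{q-1}\pmod{2^q}$; the underlying arithmetic is identical.
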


\begin{proof}
Suppose $|V(G)|=n$ and let the function $\varphi:[n]\rightarrow\lbrack r]$ be
an odd-coloring of $G.$ For $0\leq i\leq2^{q}-1,$ set
\[
V_{i}:=\{j:j\in\lbrack n],\text{ }\varphi(j)\equiv i(\operatorname{mod}%
2^{q})\},
\]
and some $V_{i}$ may be empty. Then the vertices set $V(G)$ can be
partitioned as%

\[
V(G)=V_{0}\cup\cdot\cdot\cdot\cup V_{2^{q}-1}.
\]
We claim that each set $V_{i}$ contains no edge of $G.$ Suppose not, let $e=\{j_{1},j_{2},\cdots,j_{r}\}$ be an edge in some $V_{i}.$
Since $j_{l}$ is in $V_{i},$ we have $\varphi(j_{l})=c_{l}2^{q}+i,$ where
$c_{l}$ is a nonnegative integer $(l=1,\cdots,r)$. Then we have
\begin{align*}
&  \varphi(j_{1})+\cdot\cdot\cdot+\varphi(j_{r})\\
&  =(c_{1}2^{q}+i)+\cdot\cdot\cdot+(c_{r}2^{q}+i)\\
&  =2^{q}(%
{\displaystyle\sum\limits_{l=1}^{r}}
c_{l})+ir\\
&  =2^{q}(%
{\displaystyle\sum\limits_{l=1}^{r}}
c_{l})+i2^{q}(2t+1).
\end{align*}

On the other hand, by the definition of odd-coloring, there exists some
integer $c$ such that
\begin{align*}
&  \varphi(j_{1})+\cdot\cdot\cdot+\varphi(j_{r})\\
&  =cr+r/2\\
&  =c2^{q}(2t+1)+2^{q-1}(2t+1).
\end{align*}
Hence we have%
\[
2^{q}(%
{\displaystyle\sum\limits_{l=1}^{r}}
c_{l})+i2^{q}(2t+1)=c2^{q}(2t+1)+2^{q-1}(2t+1),
\]
which implies that
\[
2[%
{\displaystyle\sum\limits_{l=1}^{r}}
c_{l}+(i-c)(2t+1)]=2t+1,
\]
and this is a contradiction, since it has different parity of two sides. So
each set $V_{i}$ does not span any edge and so $\chi(G)\leq2^{q}.$
\end{proof}

Now we construct a family of odd-colorable $r$-graphs on $n$ vertices to show that the upper bound given in Theorem \ref{bound for chromatic number} is sharp for all $n\geq2^{q}(2^{q}-1)r$.

\begin{lemma}
\label{bound is tight} Let $q\geq1,t\geq0$ be two integers and $r=2^{q}%
(2t+1).$ If $n\geq2^{q}(2^{q}-1)r$, then there exists a family of $2^{q}%
$-chromatic odd-colorable $r$-graphs on $n$ vertices.
\end{lemma}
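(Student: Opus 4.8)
The plan is to pass to the residues modulo $2^{q}$, give one explicit extremal construction, and then force its chromatic number up from below by a zero-sum (subset-sum) argument. Put $s=2^{q}$. Since $r/2=2^{q-1}(2t+1)\equiv 2^{q-1}\pmod{2^{q}}$, odd-colourability is governed by the residues mod $2^{q}$: if $\psi:[n]\to\{0,1,\dots,s-1\}$ is any map with $\sum_{v\in e}\psi(v)\equiv 2^{q-1}\pmod{2^{q}}$ for every edge $e$, then $\varphi(v):=(2t+1)\psi(v)\bmod r$ satisfies $\sum_{v\in e}\varphi(v)\equiv (2t+1)\sum_{v\in e}\psi(v)\equiv 2^{q-1}(2t+1)=r/2\pmod{r}$, so $\varphi$ is an odd-colouring. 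Thus it suffices to build an $r$-graph carrying such a $\psi$ whose chromatic number is at least $2^{q}$; the matching bound $\chi\le 2^{q}$ is then free from Theorem \ref{bound for chromatic number}. For the construction I would split $[n]$ into $s$ colour classes $V_{0},\dots,V_{s-1}$ as equally as possible, set $\psi\equiv i$ on $V_{i}$, and take as edges \emph{all} $r$-subsets $e$ with $\sum_{v\in e}\psi(v)\equiv 2^{q-1}\pmod{2^{q}}$.

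To show $\chi\ge 2^{q}$ I would bound the independence number and use $\chi\ge n/\alpha$. Let $S$ be independent, let $i^{\ast}$ be a most frequent $\psi$-value on $S$, and write $a=|S\cap V_{i^{\ast}}|$ and $O=S\setminus V_{i^{\ast}}$. If $a<r$, then every class meets $S$ in fewer than $r$ vertices, so $|S|\le s(r-1)<sr\le n/(s-1)$. If $a\ge r$, the key point is that whenever the nonzero shifted values $\{\psi(v)-i^{\ast}:v\in O\}\subseteq \mathbb{Z}/2^{q}$ admit a subset summing to $2^{q-1}$, say of size $w$, we may adjoin $r-w$ vertices of value $i^{\ast}$ (available since $a\ge r\ge r-w$) to obtain $r$ vertices of $S$ whose $\psi$-sum is $r\,i^{\ast}+2^{q-1}\equiv 2^{q-1}$, i.e.\ an edge lying inside $S$ --- a contradiction. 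Hence $\{\psi(v)-i^{\ast}:v\in O\}$ contains no subset summing to $2^{q-1}$, which by the lemma below forces $|O|\le 2^{q}-1$ and therefore $|S|\le \lceil n/s\rceil+2^{q}-1$. Using $n\ge 2^{q}(2^{q}-1)r$ one checks $n/(s-1)-\lceil n/s\rceil>n/(s(s-1))-1\ge r-1\ge 2^{q}-1$, so $|S|<n/(s-1)$ in this case too. Thus $\alpha<n/(s-1)$, giving $\chi\ge 2^{q}$, and hence $\chi=2^{q}$.

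The technical heart, and the step I expect to be the main obstacle, is the following zero-sum fact: \emph{every sequence of at least $2^{q}$ nonzero elements of $\mathbb{Z}/2^{q}$ has a nonempty subsequence summing to $2^{q-1}$}. The threshold is essentially tight, since $2^{q-1}-1$ copies of $1$ together with $2^{q-1}-1$ copies of $-1$ give a length-$(2^{q}-2)$ sequence whose subset sums are exactly $\{-(2^{q-1}-1),\dots,2^{q-1}-1\}$, missing $2^{q-1}$. I would attempt the fact by induction on $q$: split a length-$2^{q}$ sequence into its odd and even elements; halving the even elements lands them in $\mathbb{Z}/2^{q-1}$ with target $2^{q-2}$, so if there are at least $2^{q-1}$ of them the inductive hypothesis finishes, and otherwise there are at least $2^{q-1}+1$ odd elements, which must be combined (through partial sums, together with the even elements) to reach the target. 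Making this odd-heavy case work with the exact count $2^{q}$, rather than some larger multiple of it, is the delicate point, and it is precisely this tightness that lets the hypothesis $n\ge 2^{q}(2^{q}-1)r$ suffice all the way down to the boundary case $t=0$, $r=2^{q}$; a cruder zero-sum bound would only settle the range $r>2^{q}$.
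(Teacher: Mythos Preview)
Your construction is valid and indeed odd-colorable, but the proof that $\chi\ge 2^{q}$ has a genuine gap: the zero-sum statement ``every sequence of at least $2^{q}$ nonzero elements of $\mathbb{Z}/2^{q}$ has a nonempty subsequence summing to $2^{q-1}$'' is never established. You yourself flag the odd-heavy branch of the proposed induction as ``the delicate point,'' and it really is unresolved. When fewer than $2^{q-1}$ elements are even, your sketch gives no mechanism for producing enough nonzero even blocks to feed the inductive hypothesis; for instance, with $q=3$ and odd part $\{1,1,1,7,7\}$ the odd elements alone have no subset summing to $4$, so the even elements must be brought in, and your induction scheme does not say how. The arithmetic you do afterwards (bounding $|S|$ by $\lceil n/s\rceil+2^{q}-1$ and comparing with $n/(s-1)$) is fine, but it all rests on this unproved lemma.

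The paper sidesteps the zero-sum issue completely by using a \emph{sparser} edge set and a direct pigeonhole. For each pair $1\le i<j\le 2^{q}$ it writes $j-i=2^{p}(2a+1)$, sets $b_{i,j}=r/2^{p+1}$, and lets $E_{i,j}$ consist of all $r$-sets with exactly $r-b_{i,j}$ vertices in $V_{i}$ and $b_{i,j}$ in $V_{j}$; a short computation shows every such edge has $\varphi$-sum $\equiv r/2\pmod r$. For the lower bound on $\chi$, suppose a proper $(2^{q}-1)$-colouring $C_{1},\dots,C_{2^{q}-1}$ exists. Since each $|V_{i}|\ge r(2^{q}-1)$, every row of the $2^{q}\times(2^{q}-1)$ matrix $(t_{ij})=(|V_{i}\cap C_{j}|)$ has some entry $\ge r$; with $2^{q}$ rows and only $2^{q}-1$ columns, two such ``large'' entries fall in one column $C_{k}$, giving $|V_{i}\cap C_{k}|\ge r$ and $|V_{j}\cap C_{k}|\ge r$, hence an edge of $E_{i,j}$ inside $C_{k}$. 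This is entirely elementary: no subset-sum lemma, no independence-number estimate.

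Incidentally, the paper's hypergraph is a subgraph of yours (its edges also satisfy $\sum\psi(v)\equiv 2^{q-1}\pmod{2^{q}}$), so once its chromatic number is known to be $2^{q}$ your denser graph inherits $\chi\ge 2^{q}$ for free. Your route via $\chi\ge n/\alpha$ could in principle give quantitative information the paper's argument does not, but only after the zero-sum lemma is actually proved.
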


\begin{proof}
For any $1\leq i<j\leq2^{q},$ we may write%
\[
j-i=2^{p_{i,j}}(2a_{i,j}+1),\quad \mbox {and} \quad  b_{i,j}=2^{q-p_{i,j}-1}(2t+1),
\]
where $0\leq p_{i,j}<q,$ $a_{i,j}\geq0$ are integers. By definition of $b_{i,j}$, we may see that $$b_{i,j}=\frac{2^{q}(2t+1)}{2^{p_{i,j}+1}}=\frac{r}{2^{p_{i,j}+1}}\leq \frac{r}{2}.$$

Now we start to construct the desired $r$-graph $G$. First we take $V(G)=[n]$. In order to define the edge set $E(G)$, we first take any subsets $V_1,\cdots, V_{2^{q}}$ of $V$ such that $V_{i}'s$ are pairwisely disjoint, and  $[n]=V_{1}\bigcup V_{2}\bigcup\cdot\cdot\cdot\bigcup V_{2^{q}}$, and $|V_{i}|\geq r(2^{q}-1)$, $1\leq i\leq 2^{q}.$

Using these subsets $V_1,\cdots, V_{2^{q}}$, we can define the following $2^{q-1}(2^{q}-1)$ families $E_{i,j}$ of $r$-subset of $[n]$. For any
$1\leq i<j\leq2^{q},$  set%
\[
E_{i,j}:=\{e:e\subset\lbrack n],|e\cap V_{i}|=r-b_{i,j}, \text{ and
}|e\cap V_{j}|=b_{i,j} \}.
\]
Finally, we define the desired $r$-graph $G$ as $V(G):=[n]$ and
$$E(G):=E_{1,2}\cup
E_{1,3}\cup\cdot\cdot\cdot\cup E_{(2^{q}-1),2^{q}}=\bigcup_{1\leq i<j\leq 2^{q}}E_{i,j}.$$
From the definition we can see that, if there exists a vertex subset $C\subseteq V$ with $|V_{i}\cap C|\ge r$ and $|V_{j}\cap C|\ge r$ for some $1\leq i<j\leq 2^{q}$, then $C$ contains some edge from the set $E_{i,j}$.

First we will show that $G$ is odd-colorable. Define the map $\varphi
:[n]\rightarrow\lbrack r]$ by letting
\[
\varphi(v)=i,\text{ if }v\in V_{i}.
\]
We shall check that the function $\varphi$ is an odd-coloring of $G$. Let
$e\in E(G).$ If $e\in E_{i,j}$ for some $1\leq i<j\leq 2^{q}$, then
\begin{align*}%
{\displaystyle\sum\limits_{l\in e}}
\varphi(l)  &  =(r-b_{i,j})i+b_{i,j}j\\
&  =ri+b_{i,j}(j-i)\\
&  =ri+2^{q-1}(2t+1)(2a_{i,j}+1)\\
&  =ri+2^{q}a_{i,j}(2t+1)+2^{q-1}(2t+1)\\
&  =ri+ra_{i,j}+r/2\\
&  \equiv r/2(\operatorname{mod}r).
\end{align*}
Hence if $\{j_{1},j_{2},\cdots,j_{r}\}$ is an edge of $G,$ then $\varphi
(j_{1})+\varphi(j_{2})+\cdot\cdot\cdot+\varphi(j_{r})\equiv
r/2(\operatorname{mod}r).$ So $G$ is odd-colorable.

Now we will show $\chi(G)=2^{q}.$  First from Theorem \ref{bound for chromatic number}  we know that $\chi(G)\leq2^{q},$ since $G$ is odd-colorable. Next we show that $\chi(G)\geq2^{q}$. Suppose not,  assume that we have a partition $V(G)=C_{1}\cup\cdot\cdot\cdot\cup
C_{2^{q}-1}$ such that $C_{i}^{\prime}s$ are pairwisely disjoint and there is
no edge in each $C_{i}$ for $1\leq i\leq2^{q}-1,$  we will use the pigeonhole principle to get a contradiction.

\vskip 0.38cm

We first define an auxiliary matrix $T=(t_{ij})$ with $2^q$ rows and $2^q-1$ columns such that
$$t_{ij}=|V_{i}\cap C_{j}| \qquad  (i=1,\cdots, 2^q; \ j=1,\cdots, 2^q-1)$$
Then the $i$-th row sum of the matrix $T$ is
$$\sum_{j=1}^{2^{q}-1}t_{ij}=\sum_{j=1}^{2^{q}-1}|V_{i}\cap C_{j}|=|V_{i}|\geq r(2^{q}-1)$$
This implies that for each $i=1,\cdots, 2^q$, there exists some $j\in \{1,\cdots, 2^q-1\}$ such that $t_{ij}\ge r$.

\vskip 0.18cm

An entry $t_{ij}$ of the matrix $T$ is called good if $t_{ij}\ge r$. Then the above arguments shows that every row of $T$ contains at least one good entry, so altogether $T$ contains at least $2^q$ good entries since $T$ has $2^q$ rows. On the other hand, $T$ has $2^q-1$ columns. So by the pigeonhole principle, there exists some column of $T$ containing at least two good entries, say $t_{ik}\ge r$ and $t_{jk}\ge r$ are good entries $(i<j)$. This implies that the class $C_{k}$ contains an edge from the set $E_{i,j}$, a contradiction.
\end{proof}

Combining Theorem \ref{bound for chromatic number} and Lemma \ref{bound is tight}, we can obtain the following:

\begin{theorem}
\label{max chromatic number} If $r=2^{q}%
(2t+1)$ is even and $n\ge 2^{q}(2^{q}-1)r$, then the maximum chromatic number in the class of the odd-colorable $r$-graphs on $n$ vertices is $2^q$.
\end{theorem}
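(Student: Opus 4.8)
The plan is to read off the claimed value as a two-sided bound, obtaining the upper and lower halves from the two results just established. Writing $M(n)$ for the maximum of $\chi(G)$ as $G$ ranges over all odd-colorable $r$-graphs on $n$ vertices, I would prove $M(n)\leq 2^q$ and $M(n)\geq 2^q$ separately and then conclude $M(n)=2^q$.

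For the upper bound I would invoke Theorem \ref{bound for chromatic number} directly. Since $r$ is even we have $q\geq 1$, so the hypotheses of that theorem are met, and it asserts $\chi(G)\leq 2^q$ for \emph{every} odd-colorable $r$-graph $G$, with no restriction on the number of vertices. Taking the maximum over all such $G$ on $n$ vertices gives $M(n)\leq 2^q$ at once. For the lower bound I would appeal to Lemma \ref{bound is tight}. This is precisely where the hypothesis $n\geq 2^q(2^q-1)r$ enters: under this assumption the lemma produces an odd-colorable $r$-graph $G_0$ on $n$ vertices with $\chi(G_0)=2^q$. The existence of this single witness forces $M(n)\geq \chi(G_0)=2^q$. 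Combining the two inequalities yields $M(n)=2^q$, as required.

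The combination itself is purely formal; the substance of the argument lives entirely in the two preceding results, so there is no genuine obstacle at this final step. The real work has already been carried out in the parity/modular-arithmetic contradiction underlying the upper bound, and, more delicately, in the pigeonhole count supporting the lower bound, where the careful sizing $|V_i|\geq r(2^q-1)$ together with the companion constraint $n\geq 2^q(2^q-1)r$ guarantees that the constructed graph genuinely requires $2^q$ colors. If anything here requires attention, it is only to verify that the vertex-count hypothesis stated in this theorem matches exactly the one demanded by Lemma \ref{bound is tight}, which it does.
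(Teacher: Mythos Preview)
Your proposal is correct and matches the paper's own approach exactly: the paper simply states that the theorem follows by combining Theorem~\ref{bound for chromatic number} (the upper bound) with Lemma~\ref{bound is tight} (the sharpness construction), which is precisely the two-sided bound you spell out. There is nothing to add; the substance indeed lies in those two preceding results.
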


Obviously, the special case $q\ge 2$ of Theorem \ref{max chromatic number}  also provides an answer to Question \ref{Chromatic number} in \S 1.

\section{Some applications of the symmetric spectral property of the odd-colorable $r$-graphs}

In \cite{Nikiforov2016}, Nikiforov studied some symmetric spectral property of the odd-colorable $r$-graphs. He  proved the following result of the odd-colorable $r$-graphs.

\begin{theorem}
\cite{Nikiforov2016}  \label{Nikiforov}
 Let $G$ be an  $r$-graph. Then $Spec(\mathcal{A}(G))=-Spec(\mathcal{A}(G))$ if and only if $r$ is even and $G$ is odd-colorable.
\end{theorem}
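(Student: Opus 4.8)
The plan is to prove the two implications separately, handling the ``if'' direction by an explicit diagonal scaling and the ``only if'' direction by combining Perron--Frobenius theory with a decoding argument. For the \emph{if} direction, suppose $r$ is even and $\varphi$ is an odd-coloring of $G$. I would exploit the fact (established in \cite{Shao}) that diagonally similar tensors share the same characteristic polynomial, hence the same spectrum. Set $\omega=e^{2\pi\sqrt{-1}/r}$ and $D=\mathrm{diag}(\omega^{\varphi(1)},\dots,\omega^{\varphi(n)})$, and consider $\mathcal{B}=D^{-(r-1)}\mathcal{A}(G)D$, whose entries are $\mathcal{B}_{i_1\cdots i_r}=\omega^{-(r-1)\varphi(i_1)+\varphi(i_2)+\cdots+\varphi(i_r)}\,\mathcal{A}(G)_{i_1\cdots i_r}$. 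For an entry arising from an edge $\{i_1,\dots,i_r\}$ the exponent equals $\big(\sum_{l=1}^{r}\varphi(i_l)\big)-r\varphi(i_1)\equiv \sum_{l=1}^{r}\varphi(i_l)\equiv r/2 \pmod r$, so the scalar factor is $\omega^{r/2}=-1$, while the zero entries are unchanged. Thus $\mathcal{B}=-\mathcal{A}(G)$, so $\mathcal{A}(G)$ and $-\mathcal{A}(G)$ have equal characteristic polynomials, giving $Spec(\mathcal{A}(G))=Spec(-\mathcal{A}(G))=-Spec(\mathcal{A}(G))$. Evenness of $r$ enters only to make $r/2$ an integer so that $\omega^{r/2}=-1$.

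For the \emph{only if} direction I would first reduce to the connected case, using the relation between $Spec(\mathcal{A}(G))$ and the spectra of the connected components of $G$ (the same circle of ideas invoked for the Laplacian case in \cite{Nikiforov2016}). For connected $G$ with at least one edge, $\mathcal{A}(G)$ is nonnegative and weakly irreducible, so by the Perron--Frobenius theorem its spectral radius $\rho>0$ is an eigenvalue and the eigenvalues of modulus $\rho$ form a set $\{\rho\,e^{2\pi\sqrt{-1}j/\ell}:0\le j<\ell\}$ for some cyclic index $\ell$, with $\mathcal{A}(G)$ diagonally similar to $e^{2\pi\sqrt{-1}/\ell}\mathcal{A}(G)$. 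The hypothesis $Spec(\mathcal{A})=-Spec(\mathcal{A})$ forces $-\rho$ to be an eigenvalue of modulus $\rho$, so $-1$ lies in that set, whence $2\mid\ell$; taking the $(\ell/2)$-th power of the associated scaling yields a unit-modulus diagonal matrix $D=\mathrm{diag}(d_1,\dots,d_n)$ with $D^{-(r-1)}\mathcal{A}(G)D=-\mathcal{A}(G)$.

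It then remains to read off an odd-coloring from $D$. Comparing entries, each edge $e=\{j_1,\dots,j_r\}$ satisfies $\prod_{v\in e}d_v=-d_u^{\,r}$ for every $u\in e$; multiplying these $r$ identities over $u\in e$ gives $\big(\prod_{v\in e}d_v\big)^{r}=(-1)^{r}\big(\prod_{v\in e}d_v\big)^{r}$, whence $(-1)^{r}=1$ and $r$ is even. Since the diagonal matrix furnished by the Perron--Frobenius cyclic structure has entries that are roots of unity up to a single global phase that may be absorbed, I would normalize so that $d_j=\omega^{\varphi(j)}$ for a function $\varphi:[n]\to[r]$; the relation $\prod_{v\in e}d_v=\omega^{\sum_{v\in e}\varphi(v)}=-1=\omega^{r/2}$ then reads exactly as $\sum_{v\in e}\varphi(v)\equiv r/2\pmod r$, so $\varphi$ is an odd-coloring of $G$.

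The routine parts are the forward scaling computation and the one-line parity deduction of evenness. The main obstacle is the first half of the converse: upgrading the mere set equality $Spec(\mathcal{A})=-Spec(\mathcal{A})$ into a genuine diagonal similarity $D^{-(r-1)}\mathcal{A}(G)D=-\mathcal{A}(G)$. This is precisely where the Perron--Frobenius theory of weakly irreducible nonnegative tensors must be used, namely the cyclic description of the peripheral spectrum together with its realization by a root-of-unity diagonal scaling; it is also the step demanding the careful reduction from disconnected to connected $G$ and the normalization guaranteeing that the scaling entries are true $r$-th roots of unity, so that $\varphi$ is integer-valued.
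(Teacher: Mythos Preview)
The paper does not give its own proof of this theorem: it is quoted from \cite{Nikiforov2016} and used as a black box (see the sentence introducing Theorem~\ref{Nikiforov} and the subsequent applications in Section~3). So there is no in-paper argument to compare your proposal against.

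That said, your proposal is essentially the standard proof, and it is correct in outline. Two small points deserve tightening. First, your parenthetical reference for the disconnected reduction is misplaced: the inductive peeling-off-a-component argument you allude to is carried out in the present paper for the Laplacian case (Theorem~\ref{main result of section 2}), not in \cite{Nikiforov2016}; you should mimic that induction explicitly (pick a component whose spectrum contains $-\rho$, apply Lemma~\ref{absolute of a tensor} to get the diagonal similarity there, deduce odd-colorability and hence symmetric spectrum for that component, subtract it off using Lemma~\ref{spectrum of union}, and recurse). Second, the normalization ``so that $d_j=\omega^{\varphi(j)}$'' should be justified by the observation that the relation $\prod_{v\in e}d_v=-d_u^{\,r}$ forces $d_u^{\,r}$ to be constant along edges and hence constant on a connected component, after which multiplying $D$ by a suitable unimodular scalar makes every $d_u$ an $r$-th root of unity. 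With these two clarifications your argument is complete.
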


In this section, we will give some applications and consequences of this symmetric spectral property of the odd-colorable $r$-graphs. In particular, we obtain some further symmetric spectral property of the odd-colorable $r$-graphs related to the Laplacian and signless Laplacian spectrum of an $r$-graph $G$ (see Theorem \ref{main result of section 2} below). We also use these results to study the Question \ref{Hspectra} proposed in \cite{ShaoShanWu}  about the relations between H-spectra of $\mathcal{L}(G)$ and $\mathcal{Q}(G)$ with the spectra of $\mathcal{L}(G)$ and $\mathcal{Q}(G)$, and obtain an affirmative answer to Question \ref{Hspectra} for the remaining unsolved case $r\equiv 2 \ (mod \ 4)$ in Theorem \ref{characterization of the inverse}.

\vskip 0.18cm

Recall that in Theorem 3.2 of \cite{FanKhanTan2016}, Fan et al. proved that in the case of the non-odd-bipartite connected $r$-graphs, then the following Lemma \ref{Shao and Fan} holds. Combining this with the Theorems 2.2 and 2.3 in \cite{ShaoShanWu} for the odd-bipartite  connected case, we have the following result.

\begin{lemma}
\label{Shao and Fan}
Let $G$ be a connected $r$-graph. Then $Spec(\mathcal{A}(G))=-Spec(\mathcal{A}(G))$ if and only if
$ Spec(\mathcal{L}(G))=Spec(\mathcal{Q}(G)).$
\end{lemma}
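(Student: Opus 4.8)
The plan is to prove the biconditional by splitting into two exhaustive cases according to whether the connected $r$-graph $G$ is odd-bipartite, and then to invoke the two already-established results quoted just above: Theorem 3.2 of \cite{FanKhanTan2016} for the non-odd-bipartite case, and Theorems 2.2 and 2.3 of \cite{ShaoShanWu} for the odd-bipartite case. Since every connected $r$-graph either is or is not odd-bipartite, and since odd-bipartiteness is defined only for even $r$ (so that every $r$-graph with odd $r$ lands automatically in the non-odd-bipartite case), these two cases cover all connected $r$-graphs and both parities of $r$.

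First I would dispose of the non-odd-bipartite case. Here Theorem 3.2 of \cite{FanKhanTan2016} asserts precisely the equivalence $Spec(\mathcal{A}(G))=-Spec(\mathcal{A}(G)) \Leftrightarrow Spec(\mathcal{L}(G))=Spec(\mathcal{Q}(G))$ for connected non-odd-bipartite $r$-graphs, so nothing further is required. In particular, when $r$ is odd both sides of the equivalence fail: the left side by Theorem \ref{Nikiforov}, which forces $r$ to be even, and the right side by the same Fan et al. result, so that the biconditional holds in this subcase as well.

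For the odd-bipartite case the strategy is different: instead of proving an implication in either direction, I would show that \emph{both} sides of the biconditional are true, whence the equivalence holds trivially. On one hand, an odd-bipartite $r$-graph is always odd-colorable (Proposition 11 of \cite{Nikiforov2016}), and hence by Theorem \ref{Nikiforov} its adjacency spectrum is symmetric, i.e.\ $Spec(\mathcal{A}(G))=-Spec(\mathcal{A}(G))$. On the other hand, Theorems 2.2 and 2.3 of \cite{ShaoShanWu} establish that a connected odd-bipartite $r$-graph satisfies $Spec(\mathcal{L}(G))=Spec(\mathcal{Q}(G))$; this is the tensor analogue of the cospectrality of the Laplacian and signless Laplacian of an ordinary bipartite graph, realized through a diagonal similarity (using the transversal subset $V_{1}$ witnessing odd-bipartiteness) that carries $\mathcal{Q}(G)$ to $\mathcal{L}(G)$. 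Since both the left-hand and the right-hand conditions are true in this case, the biconditional is satisfied.

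The main obstacle is not any single deep computation but making the case analysis airtight: I must confirm that the two cited results together genuinely exhaust all connected $r$-graphs for both parities of $r$, and that the odd-bipartite branch really renders both sides of the equivalence \emph{true} rather than merely yielding one implication. The most delicate interface is the parity of $r$ — verifying that odd-$r$ graphs are correctly routed into the non-odd-bipartite branch, where Theorem \ref{Nikiforov} guarantees the failure of the adjacency symmetry and \cite{FanKhanTan2016} guarantees the corresponding failure of $Spec(\mathcal{L}(G))=Spec(\mathcal{Q}(G))$, so that the biconditional remains valid throughout.
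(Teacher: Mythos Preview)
Your proposal is correct and follows essentially the same approach as the paper: a case split on whether $G$ is odd-bipartite, invoking Theorem~3.2 of \cite{FanKhanTan2016} in the non-odd-bipartite case and Theorems~2.2 and~2.3 of \cite{ShaoShanWu} in the odd-bipartite case. The only minor difference is that, in the odd-bipartite branch, you route the adjacency-spectrum symmetry through Proposition~11 of \cite{Nikiforov2016} and Theorem~\ref{Nikiforov}, whereas the paper simply attributes both sides of the equivalence to \cite{ShaoShanWu}; your extra discussion of the odd-$r$ parity issue is additional care not spelled out in the paper's two-line proof.
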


\begin{proof}
If $G$ is not odd-bipartite, the result follows from Theorem 3.2 of \cite{FanKhanTan2016}. If $G$ is odd-bipartite, the result follows from Theorems 2.2 and 2.3 of \cite{ShaoShanWu}.
\end{proof}

Combining Theorem \ref{Nikiforov} and Lemma \ref{Shao and Fan}, we can obtain that, for a connected $r$-graph $G$, its Laplacian spectrum and signless Laplacian spectrum are equal if and only if $r$ is even and $G$ is odd-colorable. In order to extend this result to the disconnected case, we need the following result which is a consequence of Corollary 4.2 of \cite{2013SSZ}.

\begin{lemma}\cite{2013SSZ}\label{spectrum of union}
Let $G$ be an $r$-graph of order $n$, $G_{1},G_{2},\cdot\cdot\cdot,G_{k}$ be all the connected components of $G$, with orders $n_1,\cdots, n_k$, respectively. Then
$$Spec(\mathcal{A}(G))=\bigcup_{i=1}^k Spec(\mathcal{A}(G_i))^{(r-1)^{n-n_i}},$$
$$Spec(\mathcal{L}(G))=\bigcup_{i=1}^k Spec(\mathcal{L}(G_i))^{(r-1)^{n-n_i}},$$
$$Spec(\mathcal{Q}(G))=\bigcup_{i=1}^k Spec(\mathcal{Q}(G_i))^{(r-1)^{n-n_i}},$$
where the notation $S^t$ means the repetition of $t$ times of the multi-set $S$.
\end{lemma}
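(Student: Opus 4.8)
The plan is to exploit the block-diagonal (direct sum) structure of the three tensors attached to a disconnected $r$-graph and then invoke the determinant formula for direct sums of tensors that underlies Corollary 4.2 of \cite{2013SSZ}. First I would relabel the vertices (a simultaneous permutation of indices, which leaves $Spec$ unchanged) so that the vertices of each component $G_i$ occupy a consecutive block. Since $(\mathcal{A}(G))_{j_1\cdots j_r}$ is nonzero only when $\{j_1,\dots,j_r\}$ is an edge, and no edge of $G$ meets two distinct components, every entry whose indices are not all drawn from a single block must vanish. Hence $\mathcal{A}(G)=\mathcal{A}(G_1)\oplus\cdots\oplus\mathcal{A}(G_k)$, and the same holds for $\mathcal{L}$ and $\mathcal{Q}$, where one additionally uses that the diagonal tensor $\mathcal{D}(G)$ splits blockwise and records exactly the degrees computed inside each component.

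Next I would apply the key determinant identity for a direct sum of tensors: if an order-$r$ dimension-$n$ tensor decomposes as $\mathcal{T}=\mathcal{T}_1\oplus\cdots\oplus\mathcal{T}_k$ with $\mathcal{T}_i$ of dimension $n_i$ and $\sum_i n_i=n$, then $\det(\mathcal{T})=\prod_{i=1}^k\det(\mathcal{T}_i)^{(r-1)^{n-n_i}}$, which is precisely Corollary 4.2 of \cite{2013SSZ}. Taking $\mathcal{T}=\lambda\mathcal{I}-\mathcal{A}(G)$, whose blocks are $\lambda\mathcal{I}_{n_i}-\mathcal{A}(G_i)$, yields the factorization of characteristic polynomials
$$\det(\lambda\mathcal{I}-\mathcal{A}(G))=\prod_{i=1}^k\det(\lambda\mathcal{I}_{n_i}-\mathcal{A}(G_i))^{(r-1)^{n-n_i}},$$
and the identical argument applies verbatim to $\mathcal{L}(G)$ and $\mathcal{Q}(G)$.

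Finally I would translate this polynomial factorization into the claimed multiset identity. By Definition \ref{spectrum}, $Spec(\mathcal{A}(G))$ is the multiset of roots of $\det(\lambda\mathcal{I}-\mathcal{A}(G))$ counted with multiplicity; the displayed factorization says each root contributed by the $i$-th factor occurs $(r-1)^{n-n_i}$ times as often as in $G_i$ alone, which is exactly $Spec(\mathcal{A}(G))=\bigcup_{i=1}^k Spec(\mathcal{A}(G_i))^{(r-1)^{n-n_i}}$ in the $S^t$ notation of the statement, and likewise for $\mathcal{L}$ and $\mathcal{Q}$.

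I expect the only genuinely delicate point to be the exponents $(r-1)^{n-n_i}$, i.e. the precise shape of the direct-sum determinant formula, rather than the structural observations. As a sanity check I would confirm that the degrees balance: the characteristic polynomial of an order-$r$ dimension-$m$ tensor has degree $m(r-1)^{m-1}$ in $\lambda$, so the right-hand side has degree $\sum_i n_i(r-1)^{n_i-1}(r-1)^{n-n_i}=\sum_i n_i(r-1)^{n-1}=n(r-1)^{n-1}$, matching the left-hand side. Since the exponent formula itself is supplied by Corollary 4.2 of \cite{2013SSZ}, the residual work is just the routine verification that the three tensors are honest direct sums, so the proof should be short.
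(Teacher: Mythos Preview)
Your proposal is correct and aligns exactly with the paper's treatment: the paper does not write out a proof but simply states that the lemma is ``a consequence of Corollary 4.2 of \cite{2013SSZ},'' which is precisely the direct-sum determinant identity you invoke after observing the block-diagonal structure of $\mathcal{A}(G)$, $\mathcal{L}(G)$, and $\mathcal{Q}(G)$. Your added degree-count sanity check is a nice touch but not needed for the argument.
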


We also need some more preliminaries for the study of disconnected case of the symmetric spectral property of the Laplacian and signless Laplacian spectrum of odd-colorable $r$-graphs.

\begin{lemma}\cite{ShaoShanWu} \label{radius to spectrum}
Let $G$ be a connected $r$-graph. Then $\rho(\mathcal{L}(G))=\rho(\mathcal{Q}(G))$ if and only if $Spec(\mathcal{L}(G))=Spec(\mathcal{Q}(G))$.
\end{lemma}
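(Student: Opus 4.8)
The plan is to prove the two directions separately, with essentially all the content residing in the forward ("only if") implication; the reverse implication is immediate. Indeed, if $Spec(\mathcal{L}(G))=Spec(\mathcal{Q}(G))$ then the two multisets of eigenvalues coincide, so in particular their maximal moduli agree, which is exactly $\rho(\mathcal{L}(G))=\rho(\mathcal{Q}(G))$.

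For the substantive direction, the key structural observation I would exploit is that the signless Laplacian is precisely the entrywise modulus of the Laplacian. Writing $\mathcal{L}=\mathcal{D}-\mathcal{A}$ and $\mathcal{Q}=\mathcal{D}+\mathcal{A}$ with $\mathcal{A}=\mathcal{A}(G)\ge 0$ and $\mathcal{D}$ the nonnegative diagonal degree tensor, every entry of $\mathcal{Q}$ equals the absolute value of the corresponding entry of $\mathcal{L}$; that is, $|\mathcal{L}(G)|=\mathcal{Q}(G)$. Since $G$ is connected, $\mathcal{Q}(G)$ is a nonnegative weakly irreducible tensor, so the Perron--Frobenius theorem applies and $\rho(\mathcal{Q}(G))$ is an eigenvalue attained by a positive eigenvector. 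First I would invoke the standard entrywise-domination bound for tensor spectral radii to record $\rho(\mathcal{L}(G))\le\rho(|\mathcal{L}(G)|)=\rho(\mathcal{Q}(G))$, so that the hypothesis $\rho(\mathcal{L}(G))=\rho(\mathcal{Q}(G))$ becomes precisely the equality case of this bound.

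The heart of the argument is then the equality analysis, a tensor analogue of Wielandt's lemma for weakly irreducible nonnegative tensors: if $|\mathcal{L}|=\mathcal{Q}$ is weakly irreducible and $\rho(\mathcal{L})=\rho(\mathcal{Q})$, then there exist a unimodular scalar $\omega$ and an invertible diagonal matrix $D$ with unit-modulus diagonal entries such that $\mathcal{L}=\omega\,D^{-(r-1)}\mathcal{Q}D$, the product being the general tensor product of Shao used in (1). I would next pin down $\omega$ by comparing diagonal entries: the transformation fixes the diagonal degree entries, so $d_j=\mathcal{L}_{jj\cdots j}=\omega\, d_j$ for every $j$, and since a connected $G$ with at least one edge has all degrees positive, this forces $\omega=1$. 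Consequently $\mathcal{L}(G)=D^{-(r-1)}\mathcal{Q}(G)D$, so $\mathcal{L}$ and $\mathcal{Q}$ are diagonally similar; as such a similarity preserves the characteristic polynomial, we conclude $Spec(\mathcal{L}(G))=Spec(\mathcal{Q}(G))$. (As a byproduct, comparing off-diagonal entries shows that the product of the diagonal signs of $D$ over each edge equals $-1$, recovering the odd-bipartiteness of $G$; for odd $r$ this is impossible, matching the fact that both conditions in the lemma then fail for any $G$ with an edge.)

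The step I expect to be the main obstacle is the tensor Wielandt equality characterization. Unlike the matrix case, the Perron--Frobenius machinery for tensors is delicate, and extracting the precise diagonal-similarity form $\mathcal{L}=\omega D^{-(r-1)}\mathcal{Q}D$ from mere equality of spectral radii requires a careful saturation analysis of the inequalities $|(\mathcal{A}z)_j|\le(\mathcal{A}|z|)_j$ at an eigenvector $z$ of $\mathcal{L}$ realizing $\rho(\mathcal{L})$, together with the positivity and the uniqueness up to scaling of the Perron eigenvector of the weakly irreducible tensor $\mathcal{Q}$. Once this characterization is established, the remaining steps are routine.
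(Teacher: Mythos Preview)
Your argument is correct. The paper does not supply its own proof of this lemma---it is quoted verbatim from \cite{ShaoShanWu}---so there is no in-paper proof to compare against line by line. That said, your route is exactly the one the cited source takes, and it dovetails with the tools already assembled in this paper: the tensor Wielandt equality characterization you identify as ``the main obstacle'' is precisely the paper's Lemma~\ref{absolute of a tensor} (from \cite{YangYang2011}), applied with $\mathcal{B}=\mathcal{L}(G)$ and $\mathcal{A}=\mathcal{Q}(G)=|\mathcal{L}(G)|$. The reduction $\omega=e^{i\theta}=1$ via the diagonal degree entries is clean, and the conclusion $Spec(\mathcal{L}(G))=Spec(\mathcal{Q}(G))$ then follows from the standard fact that the diagonal similarity $D^{-(r-1)}(\cdot)D$ preserves the characteristic polynomial (cf.\ \cite{Shao}). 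No gaps.
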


In \cite{FGH2013}, the weak irreducibility of nonnegative tensors was defined as follows.

\begin{definition} \cite{FGH2013} \label{weakly irreducible}
Let $\mathcal{A}$ be an order $r$ dimension $n$ tensor. If there exists a proper subset $I$ of the set $[n]$ such that
$$a_{i_1i_2\cdots i_r}=0 \quad (\forall \ i_1\in I, \  \mbox {and at least one of the}  \ i_2,\cdots, i_r\notin I).$$
Then $\mathcal{A}$ is called weakly reducible (or sometimes $I$-weakly reducible).
If $\mathcal{A}$ is not weakly reducible, then $\mathcal{A}$ is called weakly irreducible.
\end{definition}

It was proved in \cite{FGH2013} and \cite{YangYang2011} that an $r$-graph $G$
is connected if and only if its adjacency tensor $\mathcal{A(}G\mathcal{)}$
(and so $\mathcal{Q(}G\mathcal{)}$) is weakly irreducible.

\begin{lemma}
\cite{YangYang2011} \label{absolute of a tensor}Let $\mathcal{A}$ and
$\mathcal{B}$ be order $r$ dimension $n$ tensors satisfying $|\mathcal{B}%
|\leq\mathcal{A}$, $\mathcal{A}$ is weakly irreducible. Then

(1). $\rho(\mathcal{B})\leq\rho(\mathcal{A}).$

(2). If $\rho(\mathcal{A})e^{i\theta}$ is an eigenvalue of $\mathcal{B}.$ Then
$\mathcal{B=}$ $e^{i\theta}U^{-(r-1)}\mathcal{A}U$ for some nonsingular
diagonal matrix $U$ all of whose diagonal entries have absolute value 1.
\end{lemma}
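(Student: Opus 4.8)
The plan is to regard this as the tensor version of Wielandt's classical inequality for nonnegative matrices, and to run the argument inside the Perron--Frobenius theory for nonnegative weakly irreducible tensors developed in \cite{FGH2013}. Since $|\mathcal{B}|\le\mathcal{A}$ entrywise, $\mathcal{A}$ is nonnegative, so that theory applies: $\rho(\mathcal{A})$ is an eigenvalue of $\mathcal{A}$ with a strictly positive eigenvector, it is (up to scaling) the only eigenvalue admitting a nonnegative eigenvector, and the Collatz--Wielandt lower bound
\[
z\ge 0,\ z\ne 0,\ \mathcal{A}z\ge\mu\,z^{[r-1]}\ \Longrightarrow\ \rho(\mathcal{A})\ge\mu
\]
holds. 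I will use these facts freely.

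For part (1), choose an eigenvalue $\lambda$ of $\mathcal{B}$ with $|\lambda|=\rho(\mathcal{B})$ and a corresponding eigenvector $y$, so $\mathcal{B}y=\lambda y^{[r-1]}$. Taking moduli of the $i$-th coordinate and using first the triangle inequality and then $|\mathcal{B}|\le\mathcal{A}$ gives
\[
\rho(\mathcal{B})\,|y_i|^{r-1}=\bigl|(\mathcal{B}y)_i\bigr|\le\sum_{i_2,\dots,i_r}|b_{ii_2\cdots i_r}|\,|y_{i_2}|\cdots|y_{i_r}|\le(\mathcal{A}|y|)_i,
\]
i.e. $\mathcal{A}|y|\ge\rho(\mathcal{B})\,|y|^{[r-1]}$ with $|y|\ge 0$, $|y|\ne 0$. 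The Collatz--Wielandt bound then yields $\rho(\mathcal{A})\ge\rho(\mathcal{B})$, which is (1).

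For part (2) assume $\lambda=\rho(\mathcal{A})e^{i\theta}$ is an eigenvalue of $\mathcal{B}$, so now $|\lambda|=\rho(\mathcal{A})$ and the displayed chain becomes $\mathcal{A}|y|\ge\rho(\mathcal{A})\,|y|^{[r-1]}$. The first, and crucial, step is to upgrade this to an equality with $|y|>0$: by the Perron--Frobenius theorem for weakly irreducible nonnegative tensors, a nonzero nonnegative vector satisfying the eigen-inequality at the spectral radius must be the (strictly positive) Perron eigenvector, so $\mathcal{A}|y|=\rho(\mathcal{A})\,|y|^{[r-1]}$ and $|y_i|>0$ for every $i$. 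With $|y|>0$ I then read the two inequalities above backwards. Equality in the second one, against the strictly positive weights $|y_{i_2}|\cdots|y_{i_r}|$, forces $|b_{ii_2\cdots i_r}|=a_{ii_2\cdots i_r}$ for every index tuple, i.e. $|\mathcal{B}|=\mathcal{A}$. Equality in the triangle inequality forces each nonzero summand $b_{ii_2\cdots i_r}\,y_{i_2}\cdots y_{i_r}$ to have the common phase of $(\mathcal{B}y)_i=\lambda y_i^{r-1}$. Writing $u_i=y_i/|y_i|$ and taking $U=\operatorname{diag}(\bar u_1,\dots,\bar u_n)$ (nonsingular, with unimodular diagonal), these phase identities, holding on the support of $\mathcal{A}$, are precisely the entrywise form of $\mathcal{B}=e^{i\theta}U^{-(r-1)}\mathcal{A}U$; a short check of the diagonal-similarity convention (which fixes whether $U$ or $U^{-1}$ carries the phases) completes (2).

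The main obstacle I anticipate is exactly the upgrade from the eigen-inequality $\mathcal{A}|y|\ge\rho(\mathcal{A})\,|y|^{[r-1]}$ to the eigen-equality together with strict positivity $|y|>0$. This is the one place where weak irreducibility of $\mathcal{A}$ is indispensable: the statement is false without it, and the step requires the full nonlinear Perron--Frobenius theorem of \cite{FGH2013}, through which weak irreducibility makes the associated nonnegative eigenmap irreducible and thereby prevents a super-solution at the top eigenvalue from being strict or from having a vanishing coordinate. Everything downstream is the bookkeeping of the two equality conditions and is routine.
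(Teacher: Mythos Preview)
The paper does not give its own proof of this lemma: it is quoted verbatim from \cite{YangYang2011} and used as a black box in the proof of Theorem~\ref{main result of section 2}. So there is no ``paper's proof'' to compare against here.

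That said, your sketch is the standard Wielandt-type argument carried over to tensors, and this is essentially what Yang and Yang do in \cite{YangYang2011}. The one step you correctly flag as delicate---upgrading $\mathcal{A}|y|\ge\rho(\mathcal{A})\,|y|^{[r-1]}$ to equality with $|y|>0$---does require care: one must first use weak irreducibility to force every coordinate of $|y|$ to be positive (a zero-pattern argument on the support of $|y|$), and only then invoke the Collatz--Wielandt min--max characterization over strictly positive vectors to rule out strict inequality. Your paragraph gestures at this but conflates the two sub-steps; in a full write-up they should be separated, since the Collatz--Wielandt bound in the form you use for part~(1) goes the wrong way to directly exclude a strict super-solution. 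With that caveat, the outline is sound and matches the source.
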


Now we can obtain the following result.

\begin{theorem}\label{main result of section 2}
Let $G$ be an $r$-graph. Then $Spec(\mathcal{L}(G))=Spec(\mathcal{Q}(G))$ if and only if $r$ is even and $G$ is odd-colorable.
\end{theorem}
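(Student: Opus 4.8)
The plan is to reduce the whole statement to the connected case, which is already available: combining Theorem~\ref{Nikiforov} with Lemma~\ref{Shao and Fan} shows that a connected $r$-graph $H$ satisfies $Spec(\mathcal{L}(H))=Spec(\mathcal{Q}(H))$ if and only if $r$ is even and $H$ is odd-colorable. Let $G_{1},\dots,G_{k}$ be the connected components of $G$, with orders $n_{1},\dots,n_{k}$ and $n=\sum_{i}n_{i}$. Because the vertex sets $V(G_{i})$ are pairwise disjoint and every edge of $G$ lies inside a single component, a map $\varphi\colon V(G)\to[r]$ is an odd-coloring of $G$ precisely when each of its restrictions to $V(G_{i})$ is an odd-coloring of $G_{i}$; hence $G$ is odd-colorable if and only if every $G_{i}$ is. In view of this remark and the connected case, the theorem will follow once I prove the purely spectral equivalence
\[
Spec(\mathcal{L}(G))=Spec(\mathcal{Q}(G))\iff Spec(\mathcal{L}(G_{i}))=Spec(\mathcal{Q}(G_{i}))\ \text{for every } i.\qquad(\star)
\]

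The backward implication of $(\star)$ is immediate from Lemma~\ref{spectrum of union}, since equal component spectra yield equal unions with the same multiplicities $(r-1)^{n-n_{i}}$. The forward implication is the crux, and I would prove it by induction on the number $k$ of components. The key preliminary fact is that, entrywise, $|\mathcal{L}(G_{i})|=\mathcal{D}(G_{i})+\mathcal{A}(G_{i})=\mathcal{Q}(G_{i})$ (the adjacency tensor has zero diagonal), and $\mathcal{Q}(G_{i})$ is weakly irreducible because $G_{i}$ is connected; so Lemma~\ref{absolute of a tensor}(1) gives $\rho(\mathcal{L}(G_{i}))\le\rho(\mathcal{Q}(G_{i}))$ for every $i$. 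Since, by Lemma~\ref{spectrum of union}, the spectrum of a disjoint union is the union of the component spectra, $\rho(\mathcal{L}(G))=\max_{i}\rho(\mathcal{L}(G_{i}))$ and $\rho(\mathcal{Q}(G))=\max_{i}\rho(\mathcal{Q}(G_{i}))$. The hypothesis forces these maxima to coincide, say to $M$; picking a component $G_{j}$ with $\rho(\mathcal{L}(G_{j}))=M$ gives $M=\rho(\mathcal{L}(G_{j}))\le\rho(\mathcal{Q}(G_{j}))\le M$, so $\rho(\mathcal{L}(G_{j}))=\rho(\mathcal{Q}(G_{j}))$, and Lemma~\ref{radius to spectrum} upgrades this to $Spec(\mathcal{L}(G_{j}))=Spec(\mathcal{Q}(G_{j}))$ for this distinguished component.

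To finish I would peel off $G_{j}$. Setting $G'=G-G_{j}$ (with $k-1$ components and order $n'=n-n_{j}$) and using $n-n_{i}=n_{j}+(n'-n_{i})$ for $i\ne j$, Lemma~\ref{spectrum of union} regroups as
\[
Spec(\mathcal{L}(G))=Spec(\mathcal{L}(G_{j}))^{(r-1)^{n-n_{j}}}\cup Spec(\mathcal{L}(G'))^{(r-1)^{n_{j}}},
\]
and identically for $\mathcal{Q}$. Since the $G_{j}$-blocks are now equal multisets, multiset cancellation in $Spec(\mathcal{L}(G))=Spec(\mathcal{Q}(G))$ leaves $Spec(\mathcal{L}(G'))^{(r-1)^{n_{j}}}=Spec(\mathcal{Q}(G'))^{(r-1)^{n_{j}}}$, and since $(r-1)^{n_{j}}\ge 1$ this forces $Spec(\mathcal{L}(G'))=Spec(\mathcal{Q}(G'))$. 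The induction hypothesis applied to $G'$ then gives componentwise equality for all $i\ne j$, which together with $G_{j}$ establishes $(\star)$; combined with the connected case and the coloring remark, both directions of the theorem follow (the even parity of $r$ being delivered by the connected case on any component carrying an edge). The main obstacle is exactly the forward direction of $(\star)$: a global multiset spectral equality does not by itself factor over components, and it is the Perron--Frobenius input of Lemma~\ref{absolute of a tensor} (through $|\mathcal{L}(G_{i})|=\mathcal{Q}(G_{i})$ and weak irreducibility) that isolates a component on which the two spectral radii must agree, thereby starting the inductive peeling; some care with the $n$-dependent multiplicities is also needed when descending to $G'$.
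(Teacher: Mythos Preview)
Your proposal is correct and follows essentially the same approach as the paper: reduce to the connected case via Theorem~\ref{Nikiforov} and Lemma~\ref{Shao and Fan}, then for the disconnected necessity direction use induction on the number of components, invoking $|\mathcal{L}(G_i)|=\mathcal{Q}(G_i)$ together with weak irreducibility and Lemma~\ref{absolute of a tensor} to find a component where the two spectral radii agree, upgrade via Lemma~\ref{radius to spectrum}, and peel it off using Lemma~\ref{spectrum of union}. The only organizational difference is that you isolate the purely spectral componentwise equivalence $(\star)$ first and then translate to odd-colorability, whereas the paper interleaves the odd-colorability conclusion into the induction; both are the same argument.
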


\begin{proof}
First consider the case that $G$ is connected. Then by Theorem \ref{Nikiforov} and Lemma \ref{Shao and Fan} we have
$$Spec(\mathcal{L}(G))=Spec(\mathcal{Q}(G))\Longleftrightarrow Spec(\mathcal{A}(G))=-Spec(\mathcal{A}(G))\Longleftrightarrow G \  \mbox {is odd-colorable and $r$ is even}.$$

Now we consider the case that $G$ is disconnected. Let $G_{1}%
,G_{2},\cdot\cdot\cdot,G_{t}$ be all the connected components of $G$, and the number of vertices of $G_1$ be $n_1$. First we prove the sufficiency part. We have
\begin{align*}
G \  \mbox {is odd-colorable}& \Longrightarrow \mbox {Every} \ G_i \  \mbox {is odd-colorable}  \ (\forall i=1,\cdots,t)\\
&  \Longrightarrow  Spec(\mathcal{L}(G_i))=Spec(\mathcal{Q}(G_i)) \ (\forall i=1,\cdots,t)  \quad \mbox {(by the proof of connected case) }   \\
&  \Longrightarrow  Spec(\mathcal{L}(G))=Spec(\mathcal{Q}(G))  \quad \mbox {(by Lemma \ref{spectrum of union}})
\end{align*}

Now we prove the necessity part of the disconnected case. We will use induction on $t$ (the number of connected components of $G$). Set
$\rho:=\rho(\mathcal{Q(}G\mathcal{)})=\rho(\mathcal{L(}G\mathcal{)}).$ Then $\rho$ is equal to some
$\rho(\mathcal{L(}G_{j}\mathcal{))},$ say $\rho$ $=$ $\rho(\mathcal{L(}%
G_{1}\mathcal{))}.$ Since $|\mathcal{L}(G_1)|=\mathcal{Q}(G_1)$ and $\mathcal{Q}(G_1)$ is nonnegative weakly irreducible, by Lemma \ref{absolute of a tensor} we have $\rho=\rho(\mathcal{Q(}G\mathcal{)})\geq\rho(\mathcal{Q(}G_{1}\mathcal{))\geq}\rho(\mathcal{L(}%
G_{1}\mathcal{))=\rho}$. Thus we also have $\rho$ $=\rho(\mathcal{Q(}G_{1}\mathcal{))}$.
So for the connected $r$-graph $G_{1},$ we have
$\rho(\mathcal{L(}G_{1}\mathcal{))=}\rho(\mathcal{Q(}G_{1}\mathcal{))}.$ Then by Lemma \ref{radius to spectrum}, we have $Spec(\mathcal{L}(G_1))=Spec(\mathcal{Q}(G_1))$, so $G_{1}$ is odd-colorable by the above arguments for the connected case. Now consider the $r$-graph
$G^{\prime}=G_{2}\cup\cdot\cdot\cdot\cup G_{t}.$  Since $G=G_1\cup G^{\prime}$, by Lemma \ref{spectrum of union} we have
$$ Spec(\mathcal{L}(G))=Spec(\mathcal{L}(G_1))^{(r-1)^{n-n_1}}\bigcup  Spec(\mathcal{L}(G^{\prime}))^{(r-1)^{n_1}},$$
$$ Spec(\mathcal{Q}(G))=Spec(\mathcal{Q}(G_1))^{(r-1)^{n-n_1}}\bigcup  Spec(\mathcal{Q}(G^{\prime}))^{(r-1)^{n_1}},$$
Thus $Spec(\mathcal{L}(G))=Spec(\mathcal{Q}(G))$ and $Spec(\mathcal{L}(G_1))=Spec(\mathcal{Q}(G_1))$ imply that $Spec(\mathcal{L}(G^{\prime}))=Spec(\mathcal{Q}(G^{\prime}))$. So by induction on $t$ we obtain that $G^{\prime}$ is also odd-colorable. Therefore we conclude that $G$ is also odd-colorable, since $G=G_1\cup G^{\prime}$ and both $G_1$ and $G^{\prime}$ are odd-colorable.
\end{proof}

As applications of  Theorem \ref{main result of section 2}, we can further obtain the following two results (Theorem \ref{component to graph} and Theorem \ref{characterization of the inverse}).

\begin{theorem}
\label{component to graph}
Let $G$ be an $r$-graph with $r$ even, and $G_1,\cdots, G_k$ be all the connected components of $G$. Then $Spec(\mathcal{L}(G))=Spec(\mathcal{Q}(G))$ if and only if  $Spec(\mathcal{L}(G_i))=Spec(\mathcal{Q}(G_i))$ for every connected component $G_i \ (i=1,\cdots, k)$ of $G$.
\end{theorem}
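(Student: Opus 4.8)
The plan is to deduce this result directly from Theorem \ref{main result of section 2}, which characterizes the equality $Spec(\mathcal{L}(G))=Spec(\mathcal{Q}(G))$ in terms of odd-colorability whenever $r$ is even. Each connected component $G_i$ is itself an $r$-graph with the same even $r$, so Theorem \ref{main result of section 2} applies to $G$ and to every $G_i$ simultaneously. Once both characterizations are in hand, the whole statement collapses to a purely combinatorial equivalence: $G$ is odd-colorable if and only if every component $G_i$ is odd-colorable.

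First I would establish that combinatorial equivalence. For the forward implication, given an odd-coloring $\varphi:V(G)\to[r]$ of $G$, its restriction to $V(G_i)$ is an odd-coloring of $G_i$, since every edge of $G_i$ is already an edge of $G$ and hence already satisfies the congruence modulo $r$. For the reverse implication, suppose each $G_i$ carries an odd-coloring $\varphi_i:V(G_i)\to[r]$. Because the vertex sets $V(G_1),\dots,V(G_k)$ partition $V(G)$, I would define $\varphi:V(G)\to[r]$ by setting $\varphi(v)=\varphi_i(v)$ for $v\in V(G_i)$. By definition of a connected component, every edge of $G$ lies entirely within a single $G_i$, so each edge already satisfies the required congruence under $\varphi_i$, and therefore under $\varphi$; thus $\varphi$ is an odd-coloring of $G$. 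Note that components consisting of a single isolated vertex are trivially odd-colorable, so no nontriviality hypothesis is needed.

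Combining this equivalence with Theorem \ref{main result of section 2} finishes the argument: the equality $Spec(\mathcal{L}(G))=Spec(\mathcal{Q}(G))$ holds exactly when $G$ is odd-colorable, which holds exactly when every $G_i$ is odd-colorable, which in turn holds exactly when $Spec(\mathcal{L}(G_i))=Spec(\mathcal{Q}(G_i))$ for all $i$. I do not expect any serious obstacle, precisely because the hard analytic content — the disconnected case, resting on the Perron--Frobenius machinery of Lemma \ref{absolute of a tensor} and the union formula of Lemma \ref{spectrum of union} — has already been absorbed into Theorem \ref{main result of section 2}. The only point demanding a little care is the gluing step in the reverse direction of the combinatorial equivalence, which works solely because edges never straddle two components. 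As an alternative, the ``if'' direction could be obtained more directly from Lemma \ref{spectrum of union} (equal component multi-sets produce equal unions), reserving Theorem \ref{main result of section 2} for the ``only if'' direction, where disaggregating the spectral equality of $G$ back down to each component genuinely requires the odd-colorability characterization rather than the union formula alone.
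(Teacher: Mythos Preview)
Your proposal is correct and essentially matches the paper's argument: the necessity direction is handled identically, via Theorem~\ref{main result of section 2} to pass from spectral equality to odd-colorability of $G$, then to odd-colorability of each $G_i$, and back. The only minor difference is that for sufficiency the paper invokes Lemma~\ref{spectrum of union} directly rather than routing through odd-colorability, exactly the alternative you note at the end.
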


\begin{proof}
Sufficiency follows from Lemma \ref{spectrum of union}. Now we prove the necessary part. Since  $Spec(\mathcal{L}(G))=Spec(\mathcal{Q}(G))$, we see by Theorem \ref{main result of section 2} that $G$ is odd-colorable. Thus $G_i$ is also odd-colorable, and so by Theorem \ref{main result of section 2} again we have  $Spec(\mathcal{L}(G_i))=Spec(\mathcal{Q}(G_i)) \ (i=1,\cdots, k)$.
\end{proof}

An eigenvalue of a tensor $\mathcal {A}$ is called an H-eigenvalue, if there exists a real eigenvector corresponding to it.

The H-spectrum of a real tensor $\mathcal {A}$, denoted by $Hspec (\mathcal {A})$, is defined to be the set of distinct H-eigenvalues of $\mathcal {A}$. Namely,
$$Hspec (\mathcal {A})=\{\lambda \in \mathbb {R} \ | \ \lambda  \ \mbox {is an H-eigenvalue of $\mathcal {A}.$} \ \}$$

In \cite{ShaoShanWu}(Theorem 2.2), it was proved that when $r$ is even and the $r$-graph $G$ is connected, then
\begin{equation}
Hspec(\mathcal{L}(G))= Hspec (\mathcal{Q}(G)) \Longrightarrow Spec(\mathcal{L}(G))=Spec(\mathcal{Q}(G)) \label{inverse}
\end{equation}
Also the following question was asked in \cite{ShaoShanWu}:

\begin{question}
\label{Hspectra} When $r$ is even, whether the reverse implication of (\ref{inverse}) is true or not?
\end{question}

In \cite{FanKhanTan2016}, Fan et al. showed that the reverse implication of (\ref{inverse}) is not true in the case $r\equiv 0 \ (mod \ 4)$ by taking the generalized power hypergraphs $G^{r,r/2}$ ($G$ is a non-bipartite ordinary graph) as counterexamples.

Now by using Theorem \ref{main result of section 2}, we can show in the following theorem that the reverse implication of (\ref{inverse}) is true in the case  $r\equiv 2 \ (mod \ 4)$, even when $G$ is not connected, thus provide an affirmative answer to Question \ref{Hspectra} for the remaining unsolved case.

\begin{theorem}
\label{characterization of the inverse}
Let $G$ be an $r$-graph with $r\equiv 2 \ (mod \ 4)$, and $Spec(\mathcal{L}(G))=Spec(\mathcal{Q}(G))$. Then we have $Hspec(\mathcal{L}(G))= Hspec (\mathcal{Q}(G)))$.
\end{theorem}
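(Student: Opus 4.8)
The plan is to reduce the spectral hypothesis to a purely combinatorial one (odd-bipartiteness) and then exhibit an explicit real diagonal similarity carrying $\mathcal{L}(G)$ to $\mathcal{Q}(G)$; since the similarity is real, it will send real eigenvectors to real eigenvectors and thereby identify the two H-spectra.

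First I would use the hypotheses to pin down the structure of $G$. As $r\equiv 2 \ (mod \ 4)$ is even and $Spec(\mathcal{L}(G))=Spec(\mathcal{Q}(G))$, Theorem \ref{main result of section 2} gives that $G$ is odd-colorable. Now I invoke the special feature of the residue class $r\equiv 2 \ (mod \ 4)$: by Proposition 12 of \cite{Nikiforov2016}, in this case odd-colorability is equivalent to odd-bipartiteness. Hence $G$ is odd-bipartite, so there is a proper subset $V_{1}\subseteq V(G)$ meeting every edge in an odd number of vertices. This is precisely the step where the hypothesis $r\equiv 2 \ (mod \ 4)$ is indispensable: for $r\equiv 0 \ (mod \ 4)$ odd-colorability does \emph{not} force odd-bipartiteness, which is exactly why the counterexamples of Fan et al.\ break the converse there.

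Next I would build the similarity. Let $U=\mathrm{diag}(u_{1},\dots,u_{n})$ with $u_{j}=-1$ if $j\in V_{1}$ and $u_{j}=1$ otherwise. Because $r$ is even, $r-1$ is odd and $u_{j}^{-(r-1)}=u_{j}$ for $u_{j}=\pm1$. Under the conjugation $\mathcal{T}\mapsto U^{-(r-1)}\mathcal{T}U$ of Lemma \ref{absolute of a tensor}, the diagonal tensor $\mathcal{D}(G)$ is fixed (each diagonal entry picks up the factor $u_{j}^{r}=1$), while on $\mathcal{A}(G)$ each edge $e$ picks up the factor $\prod_{l\in e}u_{l}=(-1)^{|e\cap V_{1}|}=-1$. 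Thus $U^{-(r-1)}\mathcal{A}(G)U=-\mathcal{A}(G)$, and therefore
\[
U^{-(r-1)}\mathcal{L}(G)U=\mathcal{D}(G)+\mathcal{A}(G)=\mathcal{Q}(G).
\]

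Finally I would transport eigenpairs. A direct computation with the general tensor product \eqref{1} shows that if $\mathcal{L}(G)x=\lambda x^{[r-1]}$ then $\mathcal{Q}(G)(U^{-1}x)=\lambda(U^{-1}x)^{[r-1]}$, so the conjugation induces a bijection between eigenpairs of $\mathcal{L}(G)$ and $\mathcal{Q}(G)$ preserving the eigenvalue; applying it in reverse gives the other inclusion. The crucial point is that $U$ is \emph{real}, so $x$ real forces $U^{-1}x$ real; hence H-eigenvalues are matched exactly and $Hspec(\mathcal{L}(G))=Hspec(\mathcal{Q}(G))$. Note that this argument is indifferent to connectivity, since odd-bipartiteness of $G$ supplies a single global set $V_{1}$, so no separate disconnected case is required. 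The main obstacle I anticipate is not any single computation but rather correctly isolating the role of $r\equiv 2 \ (mod \ 4)$ in securing odd-bipartiteness, together with the two-fold verification that the real $\pm1$ diagonal conjugation simultaneously interchanges $\mathcal{L}(G)$ and $\mathcal{Q}(G)$ and preserves the reality of eigenvectors.
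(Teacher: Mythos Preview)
Your proof is correct. The first two steps---using Theorem~\ref{main result of section 2} to obtain odd-colorability and then Proposition~12 of \cite{Nikiforov2016} to upgrade to odd-bipartiteness when $r\equiv 2\ (\mathrm{mod}\ 4)$---coincide exactly with the paper's argument. From that point the paper proceeds differently: it passes to the connected components $G_{1},\dots,G_{k}$ (each inheriting odd-bipartiteness), cites Theorem~2.2 of \cite{ShaoShanWu} as a black box to get $Hspec(\mathcal{L}(G_{i}))=Hspec(\mathcal{Q}(G_{i}))$ for every $i$, and then reassembles. You instead unpack the content of that cited result by writing down the $\pm1$-diagonal similarity $U$ globally on $G$ and verifying directly that it carries real eigenvectors to real eigenvectors. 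This is essentially what Theorem~2.2 of \cite{ShaoShanWu} does internally, so the mathematical substance is the same; the gain in your version is that it is self-contained and, as you observe, avoids the component decomposition altogether. One minor quibble: your pointer to Lemma~\ref{absolute of a tensor} for the conjugation is a misattribution---that lemma is a Perron--Frobenius-type statement, whereas the operation $\mathcal{T}\mapsto U^{-(r-1)}\mathcal{T}U$ you actually use is just the diagonal similarity for tensors from \cite{Shao}; the computation you carry out with it is correct regardless.
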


\begin{proof} Let $G_1,\cdots, G_k$ be all the connected components of $G$.
Since $Spec(\mathcal{L}(G))=Spec(\mathcal{Q}(G))$, we obtain by Theorem \ref{main result of section 2} that $G$ is odd-colorable. By  Proposition 12 of \cite{Nikiforov2016} we deduce that $G$ is odd-bipartite since $r\equiv 2 \ (mod \ 4)$. Thus every connected component $G_i$ of $G$ is also odd-bipartite.
Now by  Theorem 2.2 of \cite{ShaoShanWu}, we obtain that $\mathcal{L}(G_i)$ and $\mathcal{Q}(G_i)$ have the same H-spectra for all connected components $G_i \ (i=1,\cdots, k)$ of $G$. Therefore we conclude that $\mathcal{L}(G)$ and $\mathcal{Q}(G)$ have the same H-spectra.
\end{proof}

 Combining Theorem \ref{main result of section 2} and Lemma \ref{radius to spectrum} we have

\begin{corollary}
 \label{spectral radius version}
 Let $G$ be a connected $r$-graph.
Then $\rho(\mathcal{L}(G))=\rho(\mathcal{Q}(G))$ if and only if $r$ is even
and $G$ is odd-colorable.
\end{corollary}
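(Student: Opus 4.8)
The plan is to obtain this corollary as an immediate consequence of chaining the two equivalences already established, namely Lemma \ref{radius to spectrum} and Theorem \ref{main result of section 2}. Since the hypothesis that $G$ is connected is exactly the standing assumption of Lemma \ref{radius to spectrum}, both tools apply directly without further work.

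First I would invoke Lemma \ref{radius to spectrum} to replace the spectral-radius condition by the full-spectrum condition. That is, for the connected $r$-graph $G$ one has
\[
\rho(\mathcal{L}(G))=\rho(\mathcal{Q}(G)) \Longleftrightarrow Spec(\mathcal{L}(G))=Spec(\mathcal{Q}(G)).
\]
Next I would apply Theorem \ref{main result of section 2}, which characterizes equality of the Laplacian and signless Laplacian spectra: namely $Spec(\mathcal{L}(G))=Spec(\mathcal{Q}(G))$ holds if and only if $r$ is even and $G$ is odd-colorable. (Note that Theorem \ref{main result of section 2} is stated for an arbitrary $r$-graph, so in particular it covers the connected case at hand.) Concatenating the two displayed equivalences then yields precisely
\[
\rho(\mathcal{L}(G))=\rho(\mathcal{Q}(G)) \Longleftrightarrow r \text{ is even and } G \text{ is odd-colorable},
\]
which is the assertion of the corollary.

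There is essentially no obstacle here, as both the analytic content (passing from one eigenvalue to the whole multiset) and the combinatorial content (the odd-colorability characterization) are fully handled by the cited results; the corollary is purely a matter of composing two biconditionals. The only point requiring any care is to confirm that the connectedness hypothesis of Lemma \ref{radius to spectrum} is met, which is granted by the statement, so that the reduction from spectral radius to spectrum is legitimate.
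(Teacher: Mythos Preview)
Your proposal is correct and matches the paper's approach exactly: the paper simply states that the corollary follows by combining Theorem \ref{main result of section 2} with Lemma \ref{radius to spectrum}, which is precisely the chain of two biconditionals you wrote out.
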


\end{document}